\documentclass[preprint,11pt,a4paper]{elsarticle}

\usepackage[T1]{fontenc}
\usepackage{lmodern}
\usepackage{amsmath,amssymb,amsthm,mathtools,booktabs}
\usepackage{microtype}
\usepackage{etoolbox}
\usepackage[a4paper,margin=22mm]{geometry}
\usepackage{hyperref}

\makeatletter
\def\rightharpoonfill@{\arrowfill@\relbar\relbar\rightharpoonup}
\newcommand\rbjt{\mathpalette{\overarrow@\rightharpoonfill@}}
\makeatother
\newcommand{\dist}{\operatorname{dist}}
\biboptions{numbers,sort&compress}

\hypersetup
{
colorlinks=true,
linkcolor=blue,
filecolor=blue,
urlcolor=blue,
citecolor=cyan,
}

\newtheorem{theorem}{Theorem}[section]
\newtheorem{lemma}[theorem]{Lemma}

\theoremstyle{definition}
\newtheorem{claim}{Claim}
\theoremstyle{plain}

\makeatletter
\renewenvironment{abstract}{%
  \global\setbox\absbox=\vbox\bgroup
  \hsize=\textwidth
  \noindent\hbox to \textwidth\bgroup\hfil
  \begin{minipage}{\dimexpr\textwidth-2\leftmargini\relax}
  \small
  \setlength{\parindent}{1.5em}%
  \noindent\hbox to \linewidth{\hfil\textbf{\@elsarticleabstitle}\hfil}%
  \par\medskip\indent\ignorespaces
}{%
  \par\end{minipage}\hfil\egroup
  \par\egroup
}
\patchcmd{\pprintMaketitle}{\hrule\vskip12pt}{\vskip12pt}{}{}
\patchcmd{\pprintMaketitle}{\hrule\vskip12pt}{\vskip12pt}{}{}
\makeatother

\renewenvironment{keyword}{%
  \def\sep{\unskip,\space}%
  \global\setbox\keybox=\vbox\bgroup
  \hsize=\textwidth
  \noindent\hbox to \textwidth\bgroup\hfil
  \begin{minipage}{\dimexpr\textwidth-2\leftmargini\relax}
    \small\normalfont
    \setlength{\parindent}{0pt}%
    \noindent\textit{Keywords:}\quad\ignorespaces
}{%
  \par\end{minipage}\hfil\egroup
  \par\egroup
}

\begin{document}

\begin{frontmatter}

\title{An asymptotically tight  bound on oriented diameter}

\author[nju]{Yaojun Chen}
\ead{yaojunc@nju.edu.cn}

\author[ecnu]{Jifu Lin}
\ead{jifulin01@163.com}

\author[fzu]{Xiaolin Wang\corref{cor1}}
\ead{xiaolinw92@163.com}

\author[nju]{Ruilin Zheng}
\ead{rlzhengmath@163.com}

\address[nju]{School of Mathematics, Nanjing University, Nanjing 210093, China}

\address[ecnu]{Department of Mathematics, East China Normal University, Shanghai 200241, China}

\cortext[cor1]{Corresponding author.}
\address[fzu]{School of Mathematics and Statistics, Fuzhou University, Fuzhou 350108, China}

\begin{abstract}
Let $f(d)$ be the smallest integer such that  every connected bridgeless graph of diameter $d$ admits an orientation with directed diameter at most $f(d)$. Chv\'atal and Thomassen (1978) proved  $\left\lceil{d^2}/{2}\right\rceil+d\le f(d) \le 2d^2+2d$.  
 In this paper, we prove  that  $f(d)\le \left\lceil d^2/2\right\rceil+6d$, which shows  
 $f(d)= d^2/2+\Theta(d)$ and determine the leading quadratic coefficient of $f(d)$.

The key method of our proof is to construct a  central subgraph $H$ such that it admits a strong orientation of diameter $O(d)$ and  the distance between $H$ and every vertex outside $H$ is at most $ d/2 $. 
\end{abstract}

\begin{keyword}Strong orientation\sep  Oriented diameter\sep   Diameter
\end{keyword}

\end{frontmatter}

\section{Introduction}
All graphs in this paper are finite and undirected unless an orientation has been specified. We allow parallel edges but no loops, and treat parallel edges as distinct edges. A graph is \emph{simple} if it has no parallel edges. For a connected graph $G$, the \emph{distance} $\dist_G(x,y)$ is the length of a shortest $x$--$y$ path, and its \emph{diameter} $\operatorname{diam}(G)$ is the maximum of these distances. An edge is a \emph{bridge} if its deletion disconnects the graph, and a graph is \emph{bridgeless} if it has no bridge. An \emph{orientation} $\rbjt{G}$ of $G$ assigns one direction to each edge independently. In particular, parallel edges may receive opposite directions. The orientation $\rbjt{G}$ is \emph{strong} if every ordered pair of vertices can be joined by a directed path. In 1939, Robbins~\cite{Robbins1939} proved that a connected graph has a strong orientation if and only if it is bridgeless. For a strong orientation $\rbjt{G}$ of $G$, let $\dist_{\rbjt{G}}(x,y)$ be the length of a shortest directed $x$--$y$ path, and let the \emph{directed diameter} $\operatorname{diam}(\rbjt{G})$ be the maximum directed distance. The \emph{oriented diameter} of a connected bridgeless graph $G$ is $\rbjt{diam}(G)=\min_{\rbjt{G}}\operatorname{diam}(\rbjt{G})$, where the minimum is over all strong orientations $\rbjt{G}$ of $G$.

By Robbins' theorem, a natural question is how small the diameter of a strong orientation can be. In particular, can it be bounded in terms of the diameter of the underlying graph? Clearly, $\rbjt{diam}(G)\ge diam(G)$. Chv\'atal and Thomassen~\cite{ChvatalThomassen1978} initiated the study of this question. For an integer $d\ge1$, let $f(d)$ be the smallest integer such that  every connected bridgeless graph of diameter $d$ admits an orientation with directed diameter at most $f(d)$. Equivalently, $$f(d)=\max\{\rbjt{diam}(G)~|~diam(G)=d~and~G~is~connected~and~bridgeless\}.$$ Chv\'atal and Thomassen~\cite{ChvatalThomassen1978} proved that
\(
\left\lceil\frac{d^2}{2}\right\rceil+d
\le f(d)\le 2d^2+2d,
\)
where the lower bound was given by a construction, and the upper bound was proved by the result of radius. The lower-bound construction for odd $d$ was subsequently corrected by Kwok, Liu and West~\cite{KwokLiuWest2010}.
Thus $f(d)=\Theta(d^2)$, but these estimates leave a factor of four between the leading coefficients. Therefore the main question  is  to determine the  coefficient of $d^2$. And this question remained the general benchmark for almost 50 years.

 In 2021, Babu, Benson, Rajendraprasad and Vaka~\cite{BabuEtAl2021} improved the upper bound for $d\ge8$ to
\(
f(d)\le1.373d^2+6.971d-1
\). More recently, Ai, Lei and Ren~\cite{AiLeiRen2026} improved the upper bound to
\(
f(d)\le d^2+\left\lceil\frac{11d}{2}\right\rceil-5.
\)
These results successively narrowed the gap to

\begin{equation}\label{1}
    \left\lceil\frac{d^2}{2}\right\rceil+d
\le f(d)\le d^2+\left\lceil\frac{11d}{2}\right\rceil-5.
\end{equation}

Alongside these general bounds, the exact values for small diameters have received considerable attention. Chv\'atal and Thomassen~\cite{ChvatalThomassen1978} proved $f(2)=6$. For diameter $3$, Kwok, Liu and West~\cite{KwokLiuWest2010} established $9\le f(3)\le11$, while Wang and Chen~\cite{WangChen2022} subsequently determined $f(3)=9$.
The next case, diameter $4$, has seen several recent advances~\cite{BabuEtAl2021,FengEtAl2026,LinYou2026,LinWangYou2026}. And the best known result of $f(4)$ is given very recently by Feng, Lei, Lian and Ren~\cite{FengEtAl2026}, who proved that $f(4)\le 16$.  Together with the  lower bound of \eqref{1}, we have  $12\le f(4)\le16$, leaving the exact value undetermined.
 These results show that determining $f(d)$ for an individual small value of $d$ requires substantially sharper estimates than the general quadratic bounds provide.

For simple graphs, oriented diameter has also been studied in terms of other graph parameters. Bounds involving the minimum degree were obtained by Bau and Dankelmann~\cite{BauDankelmann2015} and Surmacs~\cite{Surmacs2017}, with recent progress by Cochran and Wang~\cite{CochranWang2026}. Bounds involving the maximum degree, and the maximum degree together with girth, appear in~\cite{ChenChang2023,DankelmannGuoSurmacs2018}. For the domination number $\gamma(G)$, the minimum size of a vertex set containing a neighbor of every vertex outside it, Fomin, Matamala, Prisner and Rapaport~\cite{FominEtAl2004} proved $\rbjt{diam}(G)\le9\gamma(G)-5$. Kurz and L\"atsch~\cite{KurzLatsch2012} improved this to $4\gamma(G)$, and Wang and Chen~\cite{WangChen2025} established the sharp bound $\lceil(7\gamma(G)+1)/2\rceil$. Connected domination and distance domination number were considered by Dankelmann, Morgan and Rivett-Carnac~\cite{DankelmannEtAl2024}.

There is also an extensive literature on particular graph classes, including complete $k$-partite graphs for $k \geq 2$~\cite{G,LS,P}, AT-free graphs~\cite{FominEtAl2004}, chordal graphs~\cite{FominMatamalaRapaport2004},  maximal outerplanar graphs~\cite{WangEtAl2021},  near triangulations~\cite{GeLiuWang2025}   and triangulations~\cite{Liu2026,Wang}.  
In this paper, we prove the following theorem.
\begin{theorem}\label{thm:main}
Every finite connected bridgeless graph $G$ of diameter $d\ge1$ satisfies
\[
\rbjt{diam}(G)\le \left\lceil\frac{d^2}{2}\right\rceil+6d.
\]
\end{theorem}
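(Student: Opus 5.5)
The plan is the one announced in the abstract: find a central subgraph $H$ with two properties. First, $H$ has a strong orientation of diameter $O(d)$; concretely I aim for at most $4d$ or $5d$. Second, every vertex outside $H$ is within distance $\lfloor d/2\rfloor$ (or $\lceil d/2\rceil$) of $H$. The remaining vertices are then oriented by a Chv\'atal--Thomassen type argument. In that argument, a vertex at distance $k$ from an already strongly oriented subgraph costs roughly $O(k^2)$ extra in directed distance. More precisely, I would prove the following relative lemma. If $H\subseteq G$ is strongly oriented with diameter $D$, and every vertex satisfies $\dist_G(v,H)\le r$, then $G$ has a strong orientation in which every $v$ can reach $H$ within $r^2/2+O(r)$ steps and can be reached from $H$ within $r^2/2+O(r)$ steps, roughly. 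Together with $D=O(d)$ and $r\approx d/2$, this gives a total of about $2\cdot (d/2)^2+O(d)=d^2/2+O(d)$.

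\textbf{Constructing $H$.} Pick a vertex $u$ and a vertex $w$ at distance $d$ from it, with a shortest $u$--$w$ path $P$. Let $c$ be the middle vertex of $P$ (or the middle edge when $d$ is odd). Take $H$ to be the union of short cycles through the edges of a shortest path structure around the center, together with ear additions. I would build $H$ greedily. Start from a cycle through a middle edge; such a cycle exists and has length at most $2d+1$, since $G$ is bridgeless and has diameter $d$. Then repeatedly attach ears so that $H$ absorbs every vertex whose distance to $H$ exceeds $d/2$. The key claim is that a vertex $v$ with $\dist(v,H)>d/2$ forces a short ear of length $O(d)$ that brings $H$ closer to $v$. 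This ear is oriented consistently so that the diameter of $H$ grows only additively.

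The difficulty is to keep the diameter of $H$ at $O(d)$ and not let it grow with the number of ears. To handle this, I would use the fact that all ears hang off a bounded-diameter core. Each new ear is added from the core outward along shortest paths, so the directed distance from any $H$-vertex to the core, and from the core back, stays at most $d+O(1)$ plus the ear length $O(d)$. This gives $\operatorname{diam}(\rbjt{H})\le 4d+O(1)$. I expect this step to be the main obstacle: guaranteeing that every far vertex is within $d/2$ of $H$ while keeping $H$'s oriented diameter linear. My first attempt would be a ball argument. Let $B$ be the ball of radius $\lceil d/2\rceil$ around the center. Any two vertices outside $B$ are on "opposite sides" only in a limited way, so $H$ only needs to contain a 2-edge-connected subgraph spanning the center region. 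That region could be taken as the union of cycles through the edges of a BFS tree from $c$ up to depth about $d/2$, each cycle of length at most $2d+1$.

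\textbf{Extending the orientation.} Layer $V(G)\setminus V(H)$ by distance to $H$, and process the vertices as in Chv\'atal--Thomassen. For the current vertex $v$ nearest to the oriented part, take the edge $e$ from $v$ toward the oriented part. Since $e$ is not a bridge, there is a cycle through $e$. A shortest such cycle, intersected with the unoriented region, gives an ear of length at most $2k+1$ when $v$ is at depth $k$. Orient this ear as a directed path. A vertex at depth $k$ then reaches the oriented part in at most $k+(\text{earlier cost})$ steps, and the sum over layers telescopes to at most $\sum_{j\le r}(j+O(1))=r^2/2+O(r)$ in each direction. Finally, with $r\le \lceil d/2\rceil$, any pair $x,y$ satisfies $\dist(x,y)\le 2(r^2/2+O(r))+\operatorname{diam}(\rbjt{H})\le \lceil d^2/2\rceil+6d$. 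Obtaining the exact constant $6d$ requires bookkeeping of the $O(r)$ terms and of the $4d$-type bound on $H$, which I would tune at the end.
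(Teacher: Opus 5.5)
\textbf{The missing idea is the construction of $H$.} Everything depends on an $H$ with $\operatorname{diam}(\rbjt{H})=O(d)$ and covering radius $\lfloor d/2\rfloor$, and neither of your versions gives the first property.

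In the greedy version, the claim that every ear ``hangs off the core along shortest paths'' is unsupported. The ear found for a far vertex attaches to the current $H$, which already contains earlier ears. So directed distances to the core grow by an ear length at each level of nesting, and nothing bounds the nesting depth. You also never quantify how much an ear ``brings $H$ closer''. This nesting is exactly what makes the Chv\'atal--Thomassen argument quadratic.

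The ball version fails outright. The ball of radius $\lceil d/2\rceil$ about $c$ can be all of $G$ while $\rbjt{diam}(G)$ is of order $d^2$. For example, take a graph of radius $\rho$ all of whose orientations have radius at least $\rho^2+\rho$ (the tightness part of the radius theorem). Glue two cycles of length $2\rho+1$ at a centre $z$, and let $u,w$ be far vertices of the two cycles. Then $d=2\rho$ and $c=z$. Since $z$ is a cut vertex, directed paths between original vertices stay inside the original graph, so $\rbjt{diam}(G)\ge\rho^2+\rho$. A spanning $H$ satisfies $\operatorname{diam}(\rbjt{H})\ge\rbjt{diam}(G)$, so it cannot have linear oriented diameter. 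Orienting the ball with the radius theorem costs about $d^2/2$, which only brings you back to the known coefficient $1$.

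The paper uses no ball. It fixes $p,q$ with $\dist_G(p,q)=d$ and splits $V(G)$ into $X=\{x: a(x)-b(x)\le\varepsilon\}$ and $Y$. Then $H$ is the block containing all crossing edges in the union of the crossing edges with all shortest paths from $p$ to their $X$-ends and from their $Y$-ends to $q$. This graph has an acyclic orientation along which the integer potential $\varphi$ strictly increases; $\varphi$ equals $a$ on $X$ and $2d-\varepsilon-b$ on $Y$, with values in $[0,2d]$. Reversing a single directed $s$--$t$ path of length at most $d$ makes it strong, with $\dist_{\rbjt{H}}(u,v)\le\varphi(v)-\varphi(u)+3d-\varepsilon$ by Lemma~\ref{lem:reversal}. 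Hence $\operatorname{diam}(\rbjt{H})\le 5d-\varepsilon$: one global potential plus one reversed path replaces nested ears. The covering radius is then automatic. A shortest path from $x\in X$ to $q$ (or from $x\in Y$ to $p$) must use a crossing edge, and its first vertex $v$ in $H$ lies within $\min\{a(v),d+\varepsilon-a(v)\}\le\lfloor d/2\rfloor$ of $x$ (respectively $\min\{b(v),d-\varepsilon-1-b(v)\}\le\lfloor d/2\rfloor$).

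\textbf{Your extension lemma is false as stated.} A cost of $r^2/2+O(r)$ in each direction cannot hold. Combined with the paper's $H$ it would give $\rbjt{diam}(G)\le d^2/4+O(d)$, contradicting $f(d)\ge\lceil d^2/2\rceil+d$. With $H$ a single vertex it would contradict the tightness of the Chv\'atal--Thomassen radius bound $r^2+r$. Note also that your displayed $2\cdot(d/2)^2$ silently uses $r^2$ per direction, not $r^2/2$.

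The error is in the ear accounting. A vertex on an ear of length up to $2k+1$, oriented as a directed path, may need up to $2k$ steps in one of the two directions. So the correct cost is $\sum_{k\le r}2k=r(r+1)$ per direction. This is exactly the specified-root radius theorem applied to $G$ with $H$ contracted to a vertex, which is what the paper does.

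With the correct cost, the linear term is not something to tune at the end. The bound $2r(r+1)+\operatorname{diam}(\rbjt{H})\le\lceil d^2/2\rceil+6d$ requires $r=\lfloor d/2\rfloor$ together with $\operatorname{diam}(\rbjt{H})\le 5d$ for even $d$. If for odd $d$ you only get covering radius $\lceil d/2\rceil$, you need $\operatorname{diam}(\rbjt{H})\le 4d-1$.
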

Together with the lower bound of \eqref{1}, Theorem~\ref{thm:main} yields
\(
\left\lceil{d^2}/{2}\right\rceil+d
\le f(d)\le
\left\lceil{d^2}/{2}\right\rceil+6d.
\)
Consequently, $f(d)=d^2/2+\Theta(d)$.

The framework of our proof is inspired by the contraction method used  in~\cite{BabuEtAl2021,WangChen2025}.
 We first construct a subgraph $H$, which  is partly inspired by the proof for graphs of diameter~$3$ in~\cite{WangChen2022} by the first and third authors, where a single edge is selected and the vertex set is partitioned according to the distances from its two endpoints. 
In our construction of $H$, we replace the selected edge by two vertices $p$ and $q$ at distance $d$, partition $V(G)$ into two parts $X$ and $Y$ according to their relative distances from $p$ and $q$, and ensure that $H$ contains every edge between $X$ and $Y$. In this sense, $H$ acts as a central subgraph of $G$.
The two essential properties of $H$ are that every vertex outside $H$ is within distance $r=\lfloor d/2\rfloor$ of $H$, and $H$ admits a strong orientation of diameter at most $5d$. After contracting $H$ to a vertex $z$, the specified-root radius theorem of Chv\'atal and Thomassen~\cite{ChvatalThomassen1978} provides a strong orientation in which both directed distances between $z$ and every vertex are at most $r(r+1)$. Restoring $H$ and joining the resulting paths within its prescribed orientation yields a strong orientation of $G$ with diameter as required.

We finish this section with the notation used below.  A path has no repeated vertex, whereas a walk may repeat vertices; its length is its number of edges or arcs, and paths of length zero are allowed.  For a path $Q$, the notation $|Q|$ means its length. The radius of a connected graph $G$ is $\min_z\max_x\dist_G(z,x)$.    An orientation is acyclic if it contains no directed cycle. A \emph{cut vertex} is a vertex whose deletion disconnects a connected graph. A  graph with at least two vertices is \emph{2-connected} if every two distinct vertices can be connected by two internally disjoint paths. A \emph{block} of a  graph is a maximal 2-connected subgraph, a bridge together with its ends, or an isolated vertex.  Then every cycle lies in a single block.  

In the rest of this paper,
Section~\ref{sec2} constructs $H$ and its strong orientation, with a linear bound of the directed distances. Section~\ref{sec:distance} proves that every vertex outside $H$ is within distance $\lfloor d/2\rfloor$ of $H$. Section~\ref{sec:contraction} proves Theorem~\ref{thm:main} by contracting $H$ and applying the radius theorem.

\section{A central subgraph and its orientation}\label{sec2}

The aim of this section is to prove Lemma~\ref{lem:core}, which supplies the central  subgraph and the directed-distance bound.
Let $G$ be a finite connected bridgeless graph of diameter $d\ge1$. Choose $p,q\in V(G)$ with $\dist_G(p,q)=d$, and write $a(x)=\dist_G(p,x)$ and $b(x)=\dist_G(q,x)$ for every $x\in V(G)$. Let $\varepsilon=1$ when $d$ is even and $\varepsilon=0$ when $d$ is odd. Define
\[
X=\{x\in V(G):a(x)-b(x)\le\varepsilon\},\qquad Y=V(G)\setminus X.
\]
Since $a(p)-b(p)=-d$ and $a(q)-b(q)=d>\varepsilon$, we have $p\in X$ and $q\in Y$. An edge with one endpoint in each of $X$ and $Y$ is called a \emph{crossing edge}.

\begin{lemma}\label{lem:core}\label{lem:H}\label{lem:block}
Define $\varphi(x)=a(x)$ for $x\in X$ and $\varphi(x)=2d-\varepsilon-b(x)$ for $x\in Y$. There is a connected bridgeless subgraph $H$ of $G$ containing every crossing edge and admitting a strong orientation $\rbjt{H}$ of $H$ such that
\begin{equation}\label{eq:core}
\dist_{\rbjt{H}}(u,v)\le \varphi(v)-\varphi(u)+3d-\varepsilon
\qquad(u,v\in V(H)).
\end{equation}
\end{lemma}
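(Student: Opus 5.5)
The plan is to build $\rbjt{H}$ around a single \emph{forward} structure and a single \emph{return} route. Here $\varphi$ acts as a potential with $\varphi(p)=0$ and $\varphi(q)=2d-\varepsilon$. It suffices to show three things.
\begin{enumerate}
\item Every $v\in V(H)$ is reachable from $p$ in $\rbjt{H}$ within $\varphi(v)+c_1$ steps.
\item Every $u\in V(H)$ reaches $q$ within $2d-\varepsilon-\varphi(u)+c_2$ steps.
\item $q$ reaches $p$ within $d+c_3$ steps, with $c_1+c_2+c_3\le 0$ after careful bookkeeping.
\end{enumerate}
Concatenating $u\to q\to p\to v$ then gives \eqref{eq:core}. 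So I aim for exact bounds $\varphi(v)$, $2d-\varepsilon-\varphi(u)$ and $d$, with any unavoidable loss absorbed by shortcuts.

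\textbf{Local structure of a crossing edge.} Let $xy$ be a crossing edge with $x\in X$ and $y\in Y$. From $a(x)-b(x)\le\varepsilon<a(y)-b(y)$ and $|a(x)-a(y)|,|b(x)-b(y)|\le1$, one gets $a(x)-b(y)\in\{\varepsilon,\varepsilon+1\}$. Consequently $\varphi(y)-\varphi(x)=2d-\varepsilon-a(x)-b(y)$, and the $p$--$q$ walk $p\leadsto x\to y\leadsto q$ has length $a(x)+1+b(y)\approx d+O(1)$.

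\textbf{Construction of $H$.} Fix a BFS tree $T_p$ from $p$ and a BFS tree $T_q$ from $q$. For each crossing edge $xy$, let $H$ contain the $T_p$-path $P_x$ from $p$ to $x$, the edge $xy$, and the $T_q$-path $Q_y$ from $y$ to $q$. I expect to check that $P_x\subseteq G[X]$ and $Q_y\subseteq G[Y]$: along a shortest $p$-path, $a-b$ is nondecreasing when approaching $x$, and symmetrically for $q$. Hence $H$ is the union $T'_p\cup T'_q\cup C$, where $T'_p$ and $T'_q$ are subtrees and $C$ is the set of all crossing edges.

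Bridgelessness: $C$ is an edge cut separating $p$ from $q$, and $G$ is bridgeless, so $|C|\ge2$. Each tree edge $e$ of $T'_p$ lies on $P_x$ for some crossing edge $xy$. If every crossing edge hanging below $e$ still led only through $e$, then $e$ would be a bridge of $H$. To exclude this, I would augment $H$ exactly as in the diameter-$3$ argument of \cite{WangChen2022}: for each potential bridge, add a shortest detour. Such a detour exists because $e$ is not a bridge of $G$, and by the distance bounds above it has length $O(d)$ and again crosses between $X$ and $Y$.

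\textbf{Orientation.} Orient $T'_p$ away from $p$ and $T'_q$ toward $q$; along these arcs $\varphi$ increases by exactly $1$ per step. This gives the forward bounds 1 and 2 for vertices that can use a crossing edge oriented $X\to Y$. At least one crossing edge $x_0y_0$ must instead be oriented $Y\to X$, and the return route $q\to y_0\to x_0\to p$ needs tree arcs in the opposite direction. I would therefore reserve a second, edge-disjoint return path, supplied by the bridgeless detours above, and orient it backward.

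\textbf{Main obstacle.} The hard step is making the return route and the forward trees coexist without destroying the forward bounds. Where they share edges, vertices on the shared part must reach $q$ (or be reached from $p$) by a different route. The $O(d)$ slack in $3d-\varepsilon$, beyond the $2d-\varepsilon$ spent going forward, is what must absorb this detour cost. My first attempt would be an inductive, block-by-block (ear-decomposition) orientation of $H$. I would process crossing edges in order of $a(x)$, and at each step verify that the potential inequality \eqref{eq:core} is preserved for pairs inside the new ear and between the ear and the old part.
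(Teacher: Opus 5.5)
\textbf{There is a genuine gap.} The orientation of $H$ is the whole content of the lemma, and you never construct it. Worse, the three-leg reduction you propose for it cannot be carried out in general.

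Take $G=C_{2d+1}$ with vertices $0,1,\ldots,2d$ in cyclic order, $p=0$, $q=d$ and $d\ge4$. There are exactly two crossing edges, one on each $p$--$q$ arc. So any connected bridgeless $H$ containing them, in particular your skeleton, is the whole cycle. There is no edge left for a ``second, edge-disjoint return path'', and the only strong orientations are the two cyclic ones.

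Consider the orientation $0\to 2d\to 2d-1\to\cdots\to d\to d-1\to\cdots\to 0$.
\begin{itemize}
\item Reaching $v=1$ from $p$ takes $2d$ steps, although $\varphi(1)=1$.
\item Reaching $q$ from $u=d-1$ takes $2d$ steps, although $2d-\varepsilon-\varphi(d-1)=b(d-1)=1$.
\end{itemize}
Hence $c_1,c_2\ge 2d-1$ and $c_3\ge0$. The other cyclic orientation behaves symmetrically (use $v=2d$, $u=d+1$). So $c_1+c_2+c_3\le 0$ is unattainable, even though the lemma's bound does hold there: the direct path from $d-1$ to $1$ has length $d-2$.

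The hub route $u\to q\to p\to v$ double-counts the return route. A vertex lying on it must backtrack along it in the first leg, and the middle leg traverses it again.

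Separately, $a(x)+1+b(y)$ is not $d+O(1)$ for a general crossing edge; it can be as large as $2d-1$. Only a crossing edge on a geodesic gives a $p$--$q$ route of length $d$.

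\textbf{The missing idea} is a single path reversal, analysed globally through $\varphi$ rather than leg by leg.

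First, orient your skeleton forward: tree arcs away from $p$ and towards $q$, and every crossing edge from $X$ to $Y$. This orientation is acyclic, and $\varphi$ increases by at least $1$ on every arc. For a crossing arc $(x,y)$, use $b(y)\le a(y)-\varepsilon-1\le d-\varepsilon-1$ and $a(x)\le d$.

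Second, do not patch bridges with detours. A detour may contain an edge joining two vertices of $X$ with equal $a$-value, and no orientation makes such an edge $\varphi$-increasing. The lemma does not require $p,q\in V(H)$, so instead take $H$ to be the block of your skeleton containing the crossing edges. Any two crossing edges lie on a common cycle, formed with a $T_p$-path inside $X$ and a $T_q$-path inside $Y$. Then every vertex of $H$ lies on a directed $s$--$t$ path, where $s$ is the last vertex common to all paths $P_x$ and $t$ is the first vertex common to all paths $Q_y$.

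Third, reverse only the arcs of the $s$--$t$ segment $Q$ of a geodesic $P_x\cup xy\cup Q_y$ with $a(x)+1+b(y)=d$. Then $|Q|\le d$.

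Strong connectivity needs only bridgelessness. Let $U$ be the set reachable from a vertex that cannot reach everything. Then $U$ has no outgoing arc, and one checks that it contains $s$ but not $t$. Since every forward arc lies on a directed $s$--$t$ path, the unique arc of $Q$ leaving $U$ is then the only edge between $U$ and its complement, i.e.\ a bridge.

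For the bound, look at \emph{any} directed $u$--$v$ path.
\begin{itemize}
\item An unreversed arc has length at most its $\varphi$-increment.
\item A reversed arc $(w',w)$, coming from an arc $(w,w')$ of $Q$, has length $\varphi(w)-\varphi(w')$ plus the surplus $\varphi(w')-\varphi(w)+1>0$.
\end{itemize}
Each arc is used at most once, and the surpluses over all of $Q$ sum to $\varphi(t)-\varphi(s)+|Q|\le(2d-\varepsilon)+d$.

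With these changes, your BFS-tree skeleton works just as the paper's union of all shortest paths does. The paper isolates the reversal step as a separate lemma.
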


The proof of this lemma first constructs an auxiliary subgraph $K$ and its acyclic orientation, then finds a connected bridgeless subgraph $H$ of $K$, and finally orients $H$ and proves~\eqref{eq:core}. Before proving Lemma~\ref{lem:core}, we prove the following lemma which will be used in the final step.

\begin{lemma}\label{lem:reversal}
Let $J$ be a connected bridgeless graph, and let $\rbjt{J}$ be an acyclic orientation of $J$. Suppose that $s\ne t$ and every vertex of $J$ lies on a directed $s$--$t$ path in $\rbjt{J}$. Let $Q$ be an arbitrary directed $s$--$t$ path in $\rbjt{J}$, and let $\rbjt{J_Q}$ be the orientation of $J$ obtained from $\rbjt{J}$ by reversing only the arcs of $Q$. Then $\rbjt{J_Q}$ is strong.

Moreover, if an integer-valued function $\varphi$ on $V(J)$ satisfies $\varphi(v)-\varphi(u)\ge1$ for every arc $(u,v)$ of $\rbjt{J}$, then for any two distinct vertices $x,y$,
\(
\dist_{\rbjt{J_Q}}(x,y)\le \varphi(y)-\varphi(x)+\varphi(t)-\varphi(s)+|Q|.
\)
\end{lemma}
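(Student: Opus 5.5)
The plan is to first prove that $\rbjt{J_Q}$ is strong by exhibiting explicit directed walks, and then to estimate their lengths with $\varphi$. Throughout I write $Q=q_0q_1\cdots q_m$ with $q_0=s$, $q_m=t$ and $m=|Q|$, and $Q'$ for the reversed path in $\rbjt{J_Q}$, which is a directed $t$--$s$ path. Two basic observations will be used repeatedly.

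First, an arc of $\rbjt{J}$ with at least one endpoint off $V(Q)$ is not an arc of $Q$, so it keeps its direction in $\rbjt{J_Q}$. An arc between two vertices of $Q$ that is not on $Q$ is a chord; since $\rbjt{J}$ is acyclic, every chord goes from some $q_i$ to some $q_j$ with $i<j$.

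Second, every arc $(u,v)$ of $\rbjt{J}$ satisfies $\varphi(v)-\varphi(u)\ge1$. Hence any directed path of $\rbjt{J}$ from $u$ to $v$ that survives in $\rbjt{J_Q}$ has length at most $\varphi(v)-\varphi(u)$.

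\emph{Step 1: reaching and leaving $Q$.} Fix distinct vertices $x,y$, and let $P_x,P_y$ be directed $s$--$t$ paths in $\rbjt{J}$ through $x$ and $y$. Follow $P_x$ forward from $x$ until its first vertex $q_i$ on $Q$. Follow $P_y$ backward from $y$ until its first vertex $q_j$ on $Q$. By the first observation, neither segment uses an arc of $Q$. So in $\rbjt{J_Q}$ we have $\dist(x,q_i)\le\varphi(q_i)-\varphi(x)$ and $\dist(q_j,y)\le\varphi(y)-\varphi(q_j)$.

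If $j\le i$, I then travel along $Q'$ from $q_i$ to $q_j$, which costs $i-j\le m$ arcs. Since $\varphi(s)\le\varphi(q_j)$ and $\varphi(q_i)\le\varphi(t)$, the total is at most
\[
\varphi(y)-\varphi(x)+\varphi(q_i)-\varphi(q_j)+m\le \varphi(y)-\varphi(x)+\varphi(t)-\varphi(s)+|Q|,
\]
as required.

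\emph{Step 2: moving forward along $Q$ (the main obstacle).} The remaining case is $j>i$: we must move forward along $Q$, against $Q'$. This is the only place where bridgelessness enters, and it is where I expect the difficulty. For each $k$ with $0\le k<m$, I would show that $\rbjt{J}$ contains a directed path $R_k$ from some $q_a$ with $a\le k$ to some $q_b$ with $b\ge k+1$. This path should be internally disjoint from $Q$ and avoid the arc $q_kq_{k+1}$ (so it may be a single chord). Then $R_k$ survives in $\rbjt{J_Q}$.

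To find $R_k$, assign to each vertex $v\notin V(Q)$ the last $Q$-vertex before $v$ on a directed $s$--$t$ path through $v$, and let $A$ be the set of vertices whose assigned index is at most $k$, together with $q_0,\dots,q_k$. If no such $R_k$ existed, I claim every edge leaving $A$ would be $q_kq_{k+1}$, making it a bridge; this contradicts the hypothesis on $J$. The claim needs acyclicity, and I need to fix the assignment so that this set really is closed; that is the point I expect to need the most care.

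With the $R_k$ in hand, from $q_i$ I alternate two moves: go back along $Q'$ to the start $q_a$ of an ear, then take the ear forward to $q_b$. I choose each ear to jump as far forward as possible (greedily), and repeat until reaching an index at least $j$. Finally I go back along $Q'$ to $q_j$.

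\emph{Step 3: the length bound.} The ear arcs are forward arcs of $\rbjt{J}$, so their total length is bounded by the total $\varphi$-increase they produce. The backward moves along $Q'$ are sub-intervals of $Q$. With the greedy choice, the backward segments should be pairwise disjoint pieces of $Q$, so their total is at most $|Q|$. The $\varphi$-increases of the ears should then telescope, after subtracting the $\varphi$-decreases along the backward segments, to at most $\varphi(t)-\varphi(s)$ beyond $\varphi(q_j)-\varphi(q_i)$. Combined with Step 1, this gives the stated inequality. Verifying this disjointness and telescoping, together with the cut argument of Step 2, is the crux of the proof. Strongness of $\rbjt{J_Q}$ then follows from the same walks, since they exist for every ordered pair $x,y$.
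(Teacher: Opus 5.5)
Your proof is correct, but it takes a different route from the paper in both halves.

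\textbf{How the paper argues.} The paper proves strongness by one global cut argument. If $\rbjt{J_Q}$ were not strong, the set $U$ reachable from some vertex would contain $s$ but not $t$. The unique arc of $Q$ leaving $U$ then turns out to be the only edge between $U$ and its complement, hence a bridge.

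For the distance bound the paper constructs no walk. It bounds the length of an \emph{arbitrary} directed $x$--$y$ path of $\rbjt{J_Q}$. Each unreversed arc $(u,v)$ costs $1\le\varphi(v)-\varphi(u)$. Each traversal of a reversed arc $(u,v)$ of $Q$ costs its $\varphi$-change $\varphi(u)-\varphi(v)$ plus a surcharge $\varphi(v)-\varphi(u)+1>0$. A path uses each arc of $Q$ at most once, so the surcharges total at most $\varphi(t)-\varphi(s)+|Q|$, and the $\varphi$-changes telescope to $\varphi(y)-\varphi(x)$.

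\textbf{How your route compares.} Your Step~3 is the same accounting applied to one specific walk. Your greedy choice of ears is exactly what guarantees that this walk uses each reversed arc at most once. The paper gets that property for free by working with paths. So once strongness is known by any means, it needs neither the ears, nor the greedy rule, nor the case split $j\le i$ versus $j>i$.

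\textbf{The points you left open do go through.}
\begin{itemize}
\item \emph{Closedness of $A$.} Any choice of assignment works. Suppose an arc $(v,w)$ leaves $A$ and is not $q_kq_{k+1}$. Either $w=q_b$ with $b\ge k+1$, or $w\notin V(Q)$ lies on an $s$--$t$ path whose $Q$-vertices just before and after $w$ have indices $c<d$ with $c>k$. By acyclicity, directed walks are paths and $Q$-indices increase along directed paths. So the concatenation through $v$ and $w$ is an ear $R_k$.
\item \emph{No arc enters $A$.} Every arc lies on some directed $s$--$t$ path, as the paper shows from acyclicity. An arc entering $A$ would therefore lie on a directed $s$--$t$ path that must cross $q_kq_{k+1}$ twice, which is impossible.
\item \emph{Disjointness.} Write $c_0=i<c_1<\cdots<c_L$ for the successive positions and $a_\ell\le c_\ell$ for the start of the ear taken at $c_\ell$. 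If $a_{\ell+1}\le c_\ell$, that ear would have been admissible at $c_\ell$ with a farther endpoint, contradicting the greedy choice. Hence $a_{\ell+1}\ge c_\ell+1$, and the back-step intervals $[a_\ell,c_\ell]$ are pairwise disjoint. The final back-step $[j,c_L]$ lies beyond $c_{L-1}<j$.
\end{itemize}

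Your route buys an explicit, constructive description of short routes. The paper's route buys brevity and a bound that holds for every directed path.
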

\begin{proof}
First, we claim that every arc $(u,v)$ of $\rbjt{J}$ lies on a directed $s$--$t$ path. Indeed, the hypothesis supplies a directed $s$--$u$ path and a directed $v$--$t$ path. Their concatenation with $(u,v)$ is a directed walk. If this walk repeated a vertex, it would contain a directed cycle, contrary to acyclicity. Hence, this walk is  the directed $s$--$t$ path containing $(u,v)$.

Suppose that $\rbjt{J_Q}$ is not strong. Then there is a vertex $z$ from which some other vertex cannot be reached by a directed path in $\rbjt{J_Q}$. Let $U$ consist of $z$ and all vertices that can be reached from $z$ in $\rbjt{J_Q}$. Thus $U$ is nonempty and is not  $V(J)$. Moreover, no arc of $\rbjt{J_Q}$ is oriented from $U$ to $V(J)\setminus U$. 

We  show that $s\in U$. Suppose not. Since every vertex of $Q$ can reach $s$ by following the reversed arcs of $Q$ in $\rbjt{J_Q}$,  no vertex of $Q$ belongs to $U$. In particular, $t\notin U$. Choose $u\in U$. A directed $s$--$t$ path through $u$ in $\rbjt{J}$ contains a directed $u$--$t$ subpath. Since $t\notin U$, this subpath contains an arc from $U$ to $V(J)\setminus U$. Since the tail of this arc belongs to $U$, this arc does not belong to $Q$. It therefore has the same direction in $\rbjt{J_Q}$, contradicting that no arc of $\rbjt{J_Q}$ is oriented from $U$ to $V(J)\setminus U$. Thus $s\in U$.

We next show that $t\notin U$. If $t\in U$, then every vertex of $Q$ belongs to $U$. Choose $v\notin U$. A directed $s$--$t$ path through $v$ in $\rbjt{J}$ contains a directed $s$--$v$ subpath. Since $s\in U$ and $v\notin U$, this subpath contains an arc from $U$ to $V(J)\setminus U$. Since all vertices of $Q$ belong to $U$, this arc cannot belong to $Q$. Its direction is therefore unchanged in $\rbjt{J_Q}$, again a contradiction.

 Observe that no arc of $Q$ can be oriented from $V(J)\setminus U$ to $U$ in $\rbjt{J}$, because reversing such an arc would produce an arc from $U$ to $V(J)\setminus U$ in $\rbjt{J_Q}$. Now follow $Q$ from $s$ to $t$ in $\rbjt{J}$. Since $s\in U$ and $t\notin U$, 
  exactly one arc of $Q$,   denoted by $e$, is oriented from $U$ to $V(J)\setminus U$. Observe that any arc of $\rbjt{J}$ from $U$ to $V(J)\setminus U$ must belong to $Q$, since otherwise it would retain its direction in $\rbjt{J_Q}$. Hence, only $e$ is oriented from $U$ to $V(J)\setminus U$ in $\rbjt{J}$.

There is also no arc of $\rbjt{J}$ from $V(J)\setminus U$ to $U$. Suppose  $(u,v)$ were such an arc. By the first paragraph of this proof, this arc lies on a directed $s$--$t$ path in $\rbjt{J}$. Then the directed $s$--$u$ subpath must leave $U$, and hence must use $e$. And the directed $v$--$t$ subpath must also leave $U$, and hence must use $e$ again. This is impossible, since a directed $s$--$t$ path cannot use an edge twice.
 
By the above arguments, the underlying edge of $e$ is the only edge of $J$ between  $U$ and $V(J)\setminus U$, which is a bridge of $J$, a contradiction. Therefore $\rbjt{J_Q}$ is strong.

It remains to prove the distance estimate. Let $x,y$ be two distinct vertices. Choose a directed $x$--$y$ path $P$ in $\rbjt{J_Q}$. If an arc $(u, v)$ of $P$ is also an arc of $\rbjt{J}$, then  $1\le\varphi(v)-\varphi(u)$, where $1$ represents the length of $(u,v)$. If  $(u, v)$ is an arc of $P$ but $(v,u)$ is an arc of $Q$, then its length can be written as $1=\varphi(v)-\varphi(u)+[\varphi(u)-\varphi(v)+1]$, where $\varphi(u)-\varphi(v)+1\ge 2$.

Let $P=v_1v_2\cdots,v_r$, where $v_1=x$ and $v_r=y$. 
Then 
\[
\begin{aligned}
|P|\le  \sum_{i=1}^{r-1} \bigl(\varphi(v_{i+1})-\varphi(v_i)\bigr)+
   \sum_{(u,v)\text{ in }Q}
      \bigl(\varphi(v)-\varphi(u)+1\bigr)
= \varphi(y)-\varphi(x)+\varphi(t)-\varphi(s)+|Q|.
\end{aligned}
\]

Since $\dist_{\rbjt{J_Q}}(x,y)\le |P|$, the required bound follows.
\end{proof}

\begin{proof}[Proof of Lemma~\ref{lem:core}]
We first construct an auxiliary subgraph $K$ of $G$. Since $p\in X$, $q\in Y$, and $G$ is connected, there exists at least one crossing edge. For each crossing edge $uv$ with $u\in X$ and $v\in Y$, add $uv$ together with all shortest $p$--$u$ paths and all shortest $v$--$q$ paths to $K$. Obviously, $K$ is connected. The following gives some properties of $K$.

\begin{claim}\label{puvq}
  For each crossing edge $uv$ with $u\in X$ and $v\in Y$, we have the followings.

 (1)  All shortest $p$--$u$ paths and $v$--$q$ paths lie entirely in $X$ and $Y$, respectively.

    (2)      Let $p=x_0,x_1,\ldots,x_m=u$ be a shortest $p$--$u$ path. Then $a(x_i)=i$.
    
(3)  Let $v=y_0,y_1,\ldots,y_n=q$ be a shortest $v$--$q$ path. Then  $b(y_j)=n-j$.
\end{claim}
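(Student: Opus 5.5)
Parts (2) and (3) follow directly from the definition of a shortest path: the prefix $x_0\cdots x_i$ of a shortest $p$--$u$ path is itself a shortest $p$--$x_i$ path, so $a(x_i)=i$. Symmetrically, the suffix $y_j\cdots y_n$ is a shortest $y_j$--$q$ path, so $b(y_j)=n-j$. We will prove these two parts first and use them for part (1).

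For (1), consider the shortest $p$--$u$ path $x_0,\dots,x_m$. Since $b$ is a distance function, $|b(x_{i+1})-b(x_i)|\le1$, while by (2) $a(x_{i+1})-a(x_i)=1$. Hence the quantity $g(x)=a(x)-b(x)$ satisfies $g(x_{i+1})\ge g(x_i)$, i.e.\ $g$ is nondecreasing along the path. Therefore $g(x_i)\le g(x_m)=g(u)\le\varepsilon$ for every $i$, because $u\in X$. Thus every $x_i$ lies in $X$.

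The argument for the $v$--$q$ path $y_0,\dots,y_n$ is symmetric. By (3), $b(y_{j+1})=b(y_j)-1$, and $|a(y_{j+1})-a(y_j)|\le1$. Hence $g(y_{j+1})-g(y_j)=a(y_{j+1})-a(y_j)+1\ge0$, so $g$ is nondecreasing along this path as well. Consequently $g(y_j)\ge g(y_0)=g(v)>\varepsilon$ for all $j$, because $v\in Y$, and every $y_j$ lies in $Y$.

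The only real content of the claim is this monotonicity of $a-b$ along geodesics from $p$ and toward $q$; everything else is bookkeeping, so no serious obstacle is expected.
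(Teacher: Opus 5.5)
Your proposal is correct and follows essentially the same route as the paper. Both obtain (2) and (3) from the fact that subpaths of shortest paths are shortest, and then deduce (1) by showing that $a-b$ is nondecreasing along each path, since the other distance function changes by at most one across an edge.
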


\begin{proof}
For each crossing edge $uv$ with $u\in X$ and $v\in Y$, let $p=x_0,x_1,\ldots,x_m=u$ be any shortest $p$--$u$ path. Then every its subpath from $p$ to $x_i$ is shortest, and hence $a(x_i)=i$. Since $x_{i-1}x_i$ is an edge, $b(x_i)\le b(x_{i-1})+1$. Therefore
\(
a(x_i)-b(x_i)\ge a(x_{i-1})-b(x_{i-1})
\) for every $1\le i\le m$. 
Since $u\in X$, it follows that every vertex of this $p$--$u$ path belongs to $X$.  
Similarly, let $v=y_0,y_1,\ldots,y_n=q$ be a shortest $v$--$q$ path. Then $b(y_i)=n-i$ and $a(y_i)\ge a(y_{i-1})-1$. Hence $a(y_i)-b(y_i)\ge a(y_{i-1})-b(y_{i-1})$ for $1\le i\le n$. Since $v\in Y$, every vertex of this $v$--$q$ path belongs to $Y$.
\end{proof}

Now we give an acyclic orientation $\rbjt{K}$ of $K$ with some properties, which will help us to give the orientation of $H$ and estimate the directed  distance bound in Lemma \ref{lem:core}.

\begin{claim}\label{clm:K-properties} There is an acyclic  orientation $\rbjt{K}$ of $K$ with the following properties.

(1) For each crossing edge $uv$ with $u\in X$ and $v\in Y$, all shortest $p$--$u$ paths and $v$--$q$ paths are oriented as a directed path from $p$ to $u$ and from $v$ to $q$, respectively. Every crossing edge is oriented from $X$ to $Y$.

(2) For every arc $(x,y)$ of $\rbjt{K}$, we have $a(y)-a(x)=1$ if $x,y\in X$, and $b(x)-b(y)=1$ if $x,y\in Y$.

(3) Every vertex and every arc of $\rbjt{K}$ lies on a directed $p$--$q$ path.

(4) There is a directed $p$--$q$ path of length $d$ in $\rbjt{K}$.
\end{claim}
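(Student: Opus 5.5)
We orient $K$ edge by edge using the potentials $a$ and $b$. First, every edge of $K$ lies on one of the added shortest $p$--$u$ paths, one of the shortest $v$--$q$ paths, or is a crossing edge. Using Claim~\ref{puvq}, an edge of a shortest $p$--$u$ path joins $x_{i-1},x_i\in X$ with $a(x_i)=a(x_{i-1})+1$. An edge of a shortest $v$--$q$ path joins $y_{j},y_{j+1}\in Y$ with $b(y_{j+1})=b(y_j)-1$. A crossing edge joins $X$ to $Y$.

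\textbf{Definition of the orientation.} We orient each edge with both ends in $X$ toward larger $a$, each edge with both ends in $Y$ toward smaller $b$, and each crossing edge from $X$ to $Y$. This is well defined. An edge of $K$ inside $X$ must come from some shortest $p$--$u$ path, because a $v$--$q$ path lies in $Y$ and a crossing edge is not inside $X$. Its ends therefore differ by exactly $1$ in $a$, and the direction is unambiguous. Even if the same edge appears in several such paths, it is oriented consistently. The same holds in $Y$ with $b$. This gives (1) and (2) at once: every shortest path edge goes forward along the path, and (2) is exactly how we oriented.

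\textbf{Acyclicity.} Arcs go only from $X$ to $Y$, never back. A directed cycle would therefore lie entirely in $X$ or entirely in $Y$. In $X$, $a$ strictly increases along arcs; in $Y$, $b$ strictly decreases. Hence no directed cycle exists.

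\textbf{Property (3).} Each edge of $K$ was added together with a specific crossing edge $uv$ as part of a concatenation $P_1\,uv\,P_2$, where $P_1$ is a shortest $p$--$u$ path and $P_2$ a shortest $v$--$q$ path. By (1) this concatenation is a directed $p$--$q$ walk, and by Claim~\ref{puvq}(1) it is a path, since $P_1\subseteq X$ and $P_2\subseteq Y$ are disjoint. Any edge of $K$ lies on such a path: pick the crossing edge that caused its insertion and complete the other side with an arbitrary shortest path. Every vertex lies on an edge (or equals $p$), so it too lies on a directed $p$--$q$ path.

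\textbf{Property (4).} This is the only step needing care: we must exhibit a crossing edge lying on a shortest $p$--$q$ path. Take any shortest $p$--$q$ path $p=z_0,\dots,z_d=q$, so $a(z_i)=i$ and $b(z_i)=d-i$, and $a-b=2i-d$ increases. Since $p\in X$ and $q\in Y$, there is a first index $k$ with $z_k\in Y$; then $z_{k-1}z_k$ is a crossing edge. Its prefix $z_0\dots z_{k-1}$ is a shortest $p$--$z_{k-1}$ path and its suffix $z_k\dots z_d$ is a shortest $z_k$--$q$ path. Hence the whole path lies in $K$, and by (1) it is directed from $p$ to $q$, of length $d$.
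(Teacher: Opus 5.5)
Your proof is correct and takes essentially the same approach as the paper: orient edges inside $X$ toward larger $a$, edges inside $Y$ toward smaller $b$, and crossing edges from $X$ to $Y$, then obtain (4) by splitting a shortest $p$--$q$ path at its crossing edge. You also spell out the well-definedness, acyclicity and property (3) arguments, which the paper only asserts as evident.
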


\begin{proof}

We give the orientation $\rbjt{K}$ of $K$ as follows.
We orient every edge of $K$ inside $X$ towards the endpoint with larger $a$-value. Similarly,  we orient every edge within $Y$ towards the endpoint with smaller $b$-value. Finally,  every crossing edge is oriented from $X$ to $Y$. 

By the construction of $K$ and  Claim~\ref{puvq}, 
 $\rbjt{K}$ is   well defined and acyclic, and  we can see that (1)--(3) of this claim hold. Since $\dist_G(p,q)=d$, there is a $p$--$q$ path of length $d$. Indeed, this path must consist of a crossing edge $uv$,  a shortest $p$--$u$ subpath inside $X$ and a shortest $v$--$q$ subpath inside $Y$. By the above orientation, this path is oriented as a directed $p$--$q$ path of length $d$ in $\rbjt{K}$. 
\end{proof}

Before constructing $H$, we claim that there are at least two crossing edges and all crossing edges  are contained in the same block of $K$.
 Since $G$ is connected and bridgeless,
there are at least two crossing edges. 
 Fix a crossing edge $uv$ with $u\in X$ and $v\in Y$. Let $xy$ be any other  crossing edge, where $x\in X$ and $y\in Y$. By Claim \ref{puvq}(1) and the construction of $K$, we can find a $u$--$x$ path inside $X$ and $v$--$y$ path inside $Y$ in $K$. Then $uv$ and $xy$ lie in the same cycle in $K$. Hence,  all crossing edges  are contained in the same block of $K$.
 
 Now let $H$ be the block of $K$ containing all crossing edges. Then $H$ contains at least two vertices, and is connected and bridgeless. It remains to construct a strong orientation of $H$ satisfying \eqref{eq:core}. 
 Fix an orientation $\rbjt{K}$ supplied by Claim~\ref{clm:K-properties}. Firstly, we  find two vertices $s,t$ for which the restriction of $\rbjt{K}$ to $H$ satisfies the path conditions of Lemma~\ref{lem:reversal}.

\begin{claim}\label{clm:H-paths}
\emph{There are distinct vertices $s\in V(H)\cap X$ and $t\in V(H)\cap Y$ such that every vertex of $H$ lies on a directed $s$--$t$ path in the restriction of $\rbjt{K}$ to $H$. Moreover, one such directed $s$--$t$ path $\rbjt{Q}$ has length at most $d$.}
\end{claim}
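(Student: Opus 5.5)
The plan is to read off $s$ and $t$ from the block--cut tree of $K$, and to use the fact that $\rbjt{K}$ routes everything along directed $p$--$q$ paths (Claim~\ref{clm:K-properties}(3)).

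\emph{Choice of $s$ and $t$.} Since $H$ is a block of the connected graph $K$, define $s$ as follows. If $p\in V(H)$, let $s=p$. Otherwise let $s$ be the cut vertex of $K$ in $H$ that separates $p$ from $H\setminus\{s\}$, namely the vertex of $H$ closest to $p$ in the block--cut tree. Define $t$ in the same way with $q$ in place of $p$.

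\emph{Every directed $p$--$q$ path meeting an edge of $H$ crosses $H$ as one $s$--$t$ segment.} Let $R$ be a directed $p$--$q$ path in $\rbjt{K}$ that uses at least one edge of $H$. I will use two standard block facts:
\begin{itemize}
\item A path cannot leave a block through a cut vertex $c$ and later re-enter it, because re-entering would have to go through $c$ again, and a path does not repeat vertices.
\item Every path from $p$ to a vertex of $H$ passes through $s$, and every path from $H$ to $q$ passes through $t$.
\end{itemize}
Together these give the structure of $R$. It first reaches $H$ at $s$ and last leaves it at $t$. Its $s$--$t$ subpath lies entirely in $H$, and every edge of $H$ used by $R$ lies on this subpath.

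\emph{Every vertex of $H$ lies on a directed $s$--$t$ path in $H$.} $H$ is a block with at least two vertices, so every vertex $w$ of $H$ is incident with an edge $e$ of $H$. By Claim~\ref{clm:K-properties}(3), $e$ lies on a directed $p$--$q$ path $R$. By the previous step, the $s$--$t$ subpath of $R$ lies in $H$ and contains $e$, hence contains $w$.

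\emph{$s\in X$, $t\in Y$, and $s\ne t$.} $H$ contains a crossing edge $uv$ with $u\in X$, $v\in Y$. Apply the previous step to a directed $p$--$q$ path through $uv$. This gives a directed $s$--$t$ path $T$ in $H$ through $uv$, and it has the following properties:
\begin{itemize}
\item By Claim~\ref{clm:K-properties}(1),(2), arcs go $X\to X$, $X\to Y$ or $Y\to Y$, never $Y\to X$. So along $T$ all $X$-vertices precede all $Y$-vertices.
\item Since $T$ contains the arc $(u,v)$, its initial vertex $s$ is in $X$ and its terminal vertex $t$ is in $Y$.
\item In particular $s\ne t$.
\end{itemize}

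\emph{The short path $\rbjt{Q}$.} Take the directed $p$--$q$ path $P_0$ of length $d$ from Claim~\ref{clm:K-properties}(4). It starts in $X$ and ends in $Y$, so it uses some arc from $X$ to $Y$, i.e.\ a crossing edge. Every crossing edge lies in $H$, so $P_0$ uses an edge of $H$. By the segment step, its $s$--$t$ subpath lies in $H$. This subpath is a directed $s$--$t$ path of length at most $|P_0|=d$, and I take it as $\rbjt{Q}$.

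The main point needing care is the segment step: that a directed $p$--$q$ path meets $H$ in exactly one contiguous $s$--$t$ piece. This is the standard block--cut tree argument, and it must be written carefully for the degenerate cases $p\in H$ or $q\in H$.
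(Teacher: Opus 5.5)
Your proof is correct and follows essentially the same route as the paper. Both arguments rest on the fact that a path meets a block in one contiguous segment whose entry and exit vertices are forced by the block structure, and both then use Claim~\ref{clm:K-properties}(3) to cover every vertex of $H$. The only difference is how $s,t$ are chosen: you define them canonically as the attachment vertices of $H$ toward $p$ and $q$ in the block--cut tree, then deduce $s\in X$, $t\in Y$ from the orientation; the paper reads $s,t$ off the length-$d$ path and argues that every other directed $p$--$q$ path enters and leaves $H$ at those same vertices.
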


\begin{proof}

Since $H$ is a block of $K$ and  contains all crossing edges, and since 
every $p$--$q$ path $P_0$ in $K$ must contain a crossing edge, the segment of $P_0$ in   $H$ is  a subpath of length at least one. By Claim~\ref{clm:K-properties}(4), choose a directed $p$--$q$ path $\rbjt{P}$ of length $d$ in $\rbjt{K}$, while $P$ is its underlying path. 
Denote the segment of  $P$ in $H$ as a   path $Q$, while $\rbjt{Q}$ is its orientation. Let $s$ and $t$ be the tail and head of $\rbjt{Q}$, respectively. Then $s\not=t$ and  $\rbjt{Q}$ is a directed $s$--$t$ path of length at most $d$. By the construction of $K$, Claim \ref{puvq}(1) and  Claim \ref{clm:K-properties}(1), $s\in V(H)\cap X$ and $t\in V(H)\cap Y$.

It remains to prove that every vertex of $H$ lies on a directed $s$--$t$ path in the restriction of $\rbjt{K}$ to $H$.
Let $\rbjt{P_0}$ be any other directed $p$--$q$ path in $\rbjt{K}$ with its underlying path ${P_0}$, and let $Q_0$ be the segment of $P_0$ in $H$ with its orientation $\rbjt{Q_0}$.
We first show that the tail and head of $\rbjt{Q_0}$ are $s$ and $t$, respectively. If the tail of $\rbjt{Q_0}$ is $s'\not=s$, then we can find a $s$--$s'$ path of length at least two inside $(X\backslash V(H))\cup \{s,s'\}$ in the subgraph $P\cup P_0$, a contradiction to $H$ being a block of $K$. Similarly, 
if the head of $\rbjt{Q_0}$ is $t'\not=t$, then we can find a $t$--$t'$ path of length at least two inside $(Y\backslash V(H))\cup \{t,t'\}$ in the subgraph $P\cup P_0$, a contradiction to $H$ being a  block of $K$. 
Hence, the tail and head of the segment of every directed $p$--$q$ path in $H$ are $s$ and $t$, respectively.

Fix any vertex $z\in V(H)$. By Claim~\ref{clm:K-properties}(3), there is a directed $p$--$q$ path through $z$ in $\rbjt{K}$. The tail and head of its segment in the restriction of $\rbjt{K}$ to $H$  are $s$ and $t$, respectively. Since $z\in V(H)$, this segment contains $z$. It is therefore a directed $s$--$t$ path through $z$ in the restriction of $\rbjt{K}$ to $H$. Hence,  every vertex of $H$ lies on a directed $s$--$t$ path in the restriction of $\rbjt{K}$ to $H$.
\end{proof}

Now we are ready to give the orientation of $H$. We first verify the condition on $\varphi$ in Lemma~\ref{lem:reversal}. Since the distances $a(x),b(x)$ and the numbers $d,\varepsilon$ are integers,  $\varphi$ is integer-valued. Let $(x,y)$ be an arc of the restriction of $\rbjt{K}$ to $H$. 
If $x,y\in X$, then $\varphi(x)=a(x)$ and $\varphi(y)=a(y)$. Claim~\ref{clm:K-properties}(2) therefore gives $\varphi(y)-\varphi(x)=a(y)-a(x)=1$.
If $x,y\in Y$, then $\varphi(x)=2d-\varepsilon-b(x)$ and $\varphi(y)=2d-\varepsilon-b(y)$.  Claim~\ref{clm:K-properties}(2) gives $b(x)-b(y)=1$. Therefore, $\varphi(y)-\varphi(x)=b(x)-b(y)=1$.

Finally,
by Claim~\ref{clm:K-properties}(1), 
 suppose that $x\in X$ and $y\in Y$.
 By the definition of $Y$, we have  $b(y)\le a(y)-\varepsilon-1$. Since every distance in $G$ is at most $d$,  $b(y)\le d-\varepsilon-1$ and $a(x)\le d$. It follows that
\[
\varphi(y)-\varphi(x)=2d-\varepsilon-b(y)-a(x)\ge 2d-\varepsilon-(d-\varepsilon-1)-d=1.
\]
Thus $\varphi(y)-\varphi(x)\ge1$ on every arc of the restriction of $\rbjt{K}$ to $H$.

We can now apply Lemma~\ref{lem:reversal}. Obviously, the restriction of $\rbjt{K}$ to $H$ is  acyclic. Claim~\ref{clm:H-paths} supplies distinct vertices $s,t$ such that every vertex of $H$ lies on a directed $s$--$t$ path in this restriction, together with such a path $Q$ of length at most $d$. Hence, all the hypotheses of Lemma~\ref{lem:reversal} are  satisfied. By Lemma~\ref{lem:reversal}, reversing exactly the arcs of $Q$ produces a strong orientation $\rbjt{H}$ of $H$, and every edge not in $Q$ retains its direction.

Since $s\in X$, we have $\varphi(s)=a(s)\ge0$. Since $t\in Y$, we have $\varphi(t)=2d-\varepsilon-b(t)\le2d-\varepsilon$. Consequently, $\varphi(t)-\varphi(s)\le2d-\varepsilon$. Using this inequality and $|Q|\le d$, we obtain
\[
\begin{aligned}
\dist_{\rbjt{H}}(u,v)
\le \varphi(v)-\varphi(u)+\varphi(t)-\varphi(s)+|Q|
\le \varphi(v)-\varphi(u)+3d-\varepsilon
\end{aligned}
\]
for all distinct $u,v\in V(H)$. If $u=v$, the directed distance is zero, while the right-hand side of~\eqref{eq:core} is $3d-\varepsilon>0$. Hence~\eqref{eq:core} holds for all $u,v\in V(H)$. 
\end{proof}

\section{Distance to the central subgraph}\label{sec:distance}

Retain the notation of Section~\ref{sec2}, and fix the subgraph $H$  supplied by Lemma~\ref{lem:core}. For $x\in V(G)$ and a strong orientation $\rbjt{G}$ of $G$, write $\dist_G(x,V(H))=\min_{v\in V(H)}\dist_G(x,v)$, $\dist_{\rbjt{G}}(x,V(H))=\min_{v\in V(H)}\dist_{\rbjt{G}}(x,v)$ and $\dist_{\rbjt{G}}(V(H),x)=\min_{v\in V(H)}\dist_{\rbjt{G}}(v,x)$. The following lemma shows that every vertex outside $H$ is within distance $\lfloor d/2\rfloor$ of $H$. This is the key estimate that allows Theorem~\ref{thm:radius} to be applied with $r=\lfloor d/2\rfloor$, and get the quadratic coefficient $1/2$.

\begin{lemma}\label{lem:distance-to-core}
For every vertex $x\in V(G)\backslash V(H)$, 
$
\dist_G(x,V(H))\le\left\lfloor\frac d2\right\rfloor.$

\end{lemma}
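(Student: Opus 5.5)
The plan is to reach $H$ through a crossing edge. By Lemma~\ref{lem:core}, $H$ contains every crossing edge, and hence both endpoints of every crossing edge. So it suffices to show that every vertex $x$ has a shortest path, to $q$ or to $p$ depending on its side, whose first switch between $X$ and $Y$ happens within $\lfloor d/2\rfloor$ steps of $x$. The only tools needed are the triangle inequality $a(y)+b(y)\ge d$ for every $y$ (from $\dist_G(p,q)=d$) and the bound $a,b\le d$.

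\emph{Case $x\in X$.} Let $m=b(x)\le d$ and let $x=y_0,y_1,\dots,y_m=q$ be a shortest $x$--$q$ path, so $b(y_j)=m-j$. Since $q\in Y$, there is a smallest index $j^*\ge1$ with $y_{j^*}\in Y$. Then $y_{j^*-1}\in X$, so $y_{j^*-1}y_{j^*}$ is a crossing edge and $y_{j^*-1}\in V(H)$. Hence $\dist_G(x,V(H))\le j^*-1$. For every $j$ we have $a(y_j)\ge d-b(y_j)=d-m+j$, and therefore
\[
a(y_j)-b(y_j)\ge d-2m+2j .
\]
This exceeds $\varepsilon$ as soon as $j>m-(d-\varepsilon)/2$, so $j^*-1\le \lfloor m-(d-\varepsilon)/2\rfloor\le\lfloor (d+\varepsilon)/2\rfloor$. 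For even $d$ (so $\varepsilon=1$) this is $\lfloor (d+1)/2\rfloor=d/2$. For odd $d$ (so $\varepsilon=0$) it is $\lfloor d/2\rfloor$.

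\emph{Case $x\in Y$.} The argument is symmetric. Let $m=a(x)\le d$ and let $x=y_0,\dots,y_m=p$ be a shortest $x$--$p$ path, so $a(y_j)=m-j$. Since $p\in X$, there is a smallest index $j^*$ with $y_{j^*}\in X$. Then $y_{j^*-1}y_{j^*}$ is a crossing edge, so $y_{j^*}\in V(H)$ and $\dist_G(x,V(H))\le j^*$. Now $b(y_j)\ge d-m+j$ gives
\[
a(y_j)-b(y_j)\le 2m-2j-d ,
\]
which is at most $\varepsilon$ once $j\ge (2m-d-\varepsilon)/2$. Thus $j^*\le\lceil (2m-d-\varepsilon)/2\rceil\le\lceil (d-\varepsilon)/2\rceil$. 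This equals $\lfloor d/2\rfloor$ in both parities: for even $d$ it is $\lceil (d-1)/2\rceil=d/2$, and for odd $d$ it is $\lceil d/2\rceil$, which we must check is not too large.

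The main obstacle is exactly this parity bookkeeping, and it is where $\varepsilon$ earns its role. For odd $d$ the computation in the second case must be redone with care, since it gives $\lceil d/2\rceil$ rather than $\lfloor d/2\rfloor$. The slack comes from the strict inequality defining $Y$: we need $a-b\le 0$, and in fact $a-b$ has the parity of $a+b$. One can sharpen the step count using $a(y_j)+b(y_j)\ge d$ together with this parity, or else note that $x\in Y$ itself forces $2m-d>0$. If the bound still comes out one too large for odd $d$, I would instead use the vertex $y_{j^*-1}\in Y$, which also lies on the crossing edge and so is in $H$. This gives $\dist_G(x,V(H))\le j^*-1$, which absorbs the extra step.
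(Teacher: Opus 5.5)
Your argument is correct and is essentially the paper's. You follow a shortest path from $x$ to $q$ (resp.\ $p$), stop at a vertex of $H$ on $x$'s own side no later than the first crossing edge, and bound its index using $a+b\ge d$, $a,b\le d$ and the inequality defining that side. The paper stops at the first vertex of the path lying in $H$; you stop at the same-side endpoint of the first crossing edge. Both give the bounds $\lfloor (d+\varepsilon)/2\rfloor$ and $\lfloor (d-\varepsilon-1)/2\rfloor$.

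In the case $x\in Y$, however, your ``fallback'' $y_{j^*-1}$ is not optional. It should be the argument from the start, mirroring your first case. The parity refinement you float for the $X$-endpoint $y_{j^*}$ cannot work. Take $d=3$ and the graph with edges $pu,pu',uw,u'w,wq,ww',qw'$. Then $X=\{p,u,u'\}$, $H$ is the $4$-cycle $puwu'$, and $q\notin V(H)$. Here $\dist_G(q,u)=2>\lfloor 3/2\rfloor$, while $\dist_G(q,w)=1$.
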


\begin{proof}
We first consider that $x\in X$. Let $P$ be a shortest $x$--$q$ path.
Since $x\in X$ and $q\in Y$, $P$ contains a crossing edge. By Lemma~\ref{lem:H}, every crossing edge belongs to $H$, so $P$ meets $H$. Let $v$ be the vertex in $H$ which is the first vertex of $P$ from $x$ to $q$ in $H$, and denote the $x$--$v$ subpath of $P$ by $P_0$.
Since $x\in X$ and all crossing edges belong to $H$, all  vertices of $P_0$ belong to $X$. In particular, $v\in X$.
Let $|P_0|=t$.
Since $P$ is a shortest $x$--$q$ path, $b(x)=t+b(v)$, and therefore $t=b(x)-b(v)\le d-b(v)$. Since $\dist_G(p,q)=d$, $d\le a(v)+b(v)$, so $d-b(v)\le a(v)$. Since $v\in X$, we also have $b(v)\ge a(v)-\varepsilon$, and hence $d-b(v)\le d+\varepsilon-a(v)$. Consequently, $t\le\min\{a(v),d+\varepsilon-a(v)\}$. By the  definition of $\varepsilon$,
\(
\dist_G(x,V(H))\le t\le\left\lfloor\frac{d+\varepsilon}{2}\right\rfloor=\left\lfloor\frac d2\right\rfloor.
\)

Similarly, consider $x\in Y$. Let $P$ be a shortest $x$--$p$ path. Since $x\in Y$ and $p\in X$, $P$ contains a crossing edge. By Lemma~\ref{lem:H}, every crossing edge belongs to $H$, so $P$ meets $H$. Let $v$ be the first vertex of $P$ in $H$ when $P$ is followed from $x$ to $p$, and denote the $x$--$v$ subpath of $P$ by $P_0$. Since $x\in Y$ and all crossing edges belong to $H$, all  vertices of $P_0$ belong to $Y$. In particular, $v\in Y$.
Let $|P_0|=t$. Since $P$ is a shortest $x$--$p$ path, $a(x)=t+a(v)$, and therefore $t=a(x)-a(v)\le d-a(v)$. Since $\dist_G(p,q)=d$, $d\le a(v)+b(v)$, so $d-a(v)\le b(v)$. Since $v\in Y$ and $a(v)-b(v)$ is an integer, we also have $a(v)-b(v)\ge\varepsilon+1$, and hence $d-a(v)\le d-\varepsilon-1-b(v)$. Consequently, $t\le\min\{b(v),d-\varepsilon-1-b(v)\}$.
Then  \(
\dist_G(x,V(H))\le t\le \left\lfloor\frac d2\right\rfloor.
\)
\end{proof}

\section{Proof of Theorem \ref{thm:main}}\label{sec:contraction}

We use the following specified-root form of the radius theorem of
Chv\'atal and Thomassen \cite{ChvatalThomassen1978}.  The exact statement is given by
Dankelmann, Morgan and Rivett-Carnac
\cite{DankelmannEtAl2024}, and follows from the proof of
Chv\'atal and Thomassen
\cite{ChvatalThomassen1978}.

\begin{theorem}[\cite{ChvatalThomassen1978,DankelmannEtAl2024}]\label{thm:radius}
Let $J$ be a finite connected bridgeless graph, let $z$ be a specified vertex, and let $r$ be a nonnegative integer. If $\dist_J(z,x)\le r$ for every vertex $x$, then $J$ has a strong orientation $\rbjt{J}$ for which both $\dist_{\rbjt{J}}(z,x)$ and $\dist_{\rbjt{J}}(x,z)$ are at most $r(r+1)$ for every vertex $x$.
\end{theorem}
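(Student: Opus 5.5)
The plan is to build the orientation of $J$ by adding ears one at a time, processing vertices by their distance from $z$. Let $L_k=\{x:\dist_J(z,x)=k\}$ for $0\le k\le r$. I will maintain a subgraph $S\ni z$ with a strong orientation satisfying the invariant
\[
\dist_{\rbjt{S}}(z,v)\le k(k+1)\quad\text{and}\quad \dist_{\rbjt{S}}(v,z)\le k(k+1)\qquad\text{for all } v\in V(S)\cap L_k .
\]
Since $k\le r$, this gives the required bound $r(r+1)$ once $V(S)=V(J)$. At the end, edges of $J$ not in $S$ are oriented arbitrarily; adding arcs never increases directed distances. Initially $S=\{z\}$.

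For the inductive step, pick $x\notin V(S)$ with $k=\dist_J(z,x)$ minimal. Then all of $L_0\cup\dots\cup L_{k-1}$ already lies in $S$, so a BFS parent $y\in L_{k-1}$ of $x$ is in $S$. Because the edge $yx$ is not a bridge, $J-yx$ contains a path from $x$ back to $V(S)$. Let $R$ be a shortest such path, ending at $w\in V(S)$. I orient the ear $y\to x\to R\to w$ and add it to $S$. The new orientation is clearly strong.

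The vertices on the ear are reached from $z$ by first going to $y$, at cost at most $(k-1)k$, and then along the ear. They return to $z$ by continuing along the ear to $w$ and then using $\dist_{\rbjt{S}}(w,z)$.

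The main obstacle is the ear-length bound. I must show that $R$ can be chosen so that the whole ear has length at most $2k$ beyond the entry cost, and that its endpoint $w$ lies in a layer $\le k$ where the invariant is already available. The accounting should give $f(k)\le f(k-1)+2k$, where $f(k)$ denotes the maximum over $v\in V(S)\cap L_k$ of the two directed distances between $z$ and $v$. Since $f(0)=0$, this yields $f(k)=k(k+1)$.

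My first attempt at this bound is the Chv\'atal--Thomassen shortest-cycle argument. Consider the component $C$ of $J-V(S)$ containing $x$. Every edge from $C$ to $S$ goes to $L_{k-1}\cup L_k$, since vertices of $C$ lie in layers $\ge k$ and adjacent vertices differ by at most one layer. Bridgelessness provides an edge from $C$ to $S$ other than $yx$. I would then follow BFS parent pointers inside $C$ towards $x$ to bound the connecting path by $2k$. If this fails because $C$ extends too deep, the fallback is to choose $x$ and the ear jointly so as to minimise ear length, and to argue via the BFS tree that some ear of length at most $2k+1$ always exists among the remaining layer-$k$ vertices.
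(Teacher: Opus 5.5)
\textbf{There is a genuine gap, and it is fatal to the layer-by-layer scheme.} It sits at the step you flag as the main obstacle. The invariant
$\dist_{\rbjt{S}}(z,v)\le k(k+1)$ and $\dist_{\rbjt{S}}(v,z)\le k(k+1)$ for $v\in L_k$ is false in general, so no choice of ears can maintain it.

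Take $J=C_4$ with vertices $z,v_1,w,v_3$ in cyclic order, so $r=2$ and $v_1\in L_1$. Every proper subgraph of $C_4$ is a forest. So the only strongly oriented subgraph containing $z$ and $v_1$ is the whole cycle, oriented cyclically. Then one of $\dist(z,v_1)$, $\dist(v_1,z)$ equals $3>1\cdot 2$. The same failure occurs at any edge $zv$ not lying on a triangle.

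In your own terms: at the first step ($k=1$, $y=z$, $x=v_1$) the only available ear has length $4$, not at most $2k$. So the recursion $f(k)\le f(k-1)+2k$ cannot be rescued by choosing $R$, or $x$, more cleverly.

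There are two further problems with the accounting.
\begin{itemize}
\item Even with short ears it does not close. If $w\in L_k$ you only know $\dist_{\rbjt{S}}(w,z)\le k(k+1)$, so $x$ returns to $z$ at cost $|R|+k(k+1)>k(k+1)$.
\item The claim that edges from $C$ to $S$ land in $L_{k-1}\cup L_k$ is wrong. Earlier ears can put vertices of layers deeper than $k$ into $S$.
\end{itemize}

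\textbf{The missing idea is that the expensive ring is the innermost one, not the outermost.} The paper does not reprove this theorem; it quotes it from Chv\'atal--Thomassen. The standard argument uses the decomposition $r(r+1)=2r+(r-1)r$ as an induction on $r$ by contraction, which is the same mechanism the paper uses for Theorem~\ref{thm:main}.

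\begin{enumerate}
\item \emph{Short cycles at $z$.} Every edge $zv$ lies on a cycle of length at most $2r+1$. Fix a BFS tree $T$ rooted at $z$. Bridgelessness gives an edge $a_vb_v\ne zv$ leaving the branch $T_v$ below $v$. The tree path from $z$ to $a_v$, the edge $a_vb_v$, and the tree path from $b_v$ to $z$ form such a cycle.
\item \emph{Consistent orientation.} Orient a family of these cycles consistently so that they cover $N(z)$. Orient each branch $T_v$ entirely away from $z$ or entirely toward $z$. Choose the directions by a proper $2$-colouring of a spanning forest of the auxiliary graph that joins $v$ to the branch containing $b_v$. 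A branch with $b_v=z$ needs no partner.
\item \emph{The first ring.} The union $H_1$ of the chosen cycles is strongly oriented and contains $N[z]$. Every vertex of $H_1$ lies on a directed cycle through $z$ of length at most $2r+1$, so it is within directed distance $2r$ of $z$ in both directions.
\item \emph{Contract and induct.} The graph $J/H_1$ is bridgeless, and every vertex is within distance $r-1$ of the contracted vertex. Induction gives an orientation of $J/H_1$ with both distances at most $(r-1)r$. Lift each path by prepending or appending a path inside $\rbjt{H_1}$, at cost at most $2r$. This yields $2r+(r-1)r=r(r+1)$.
\end{enumerate}
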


\begin{proof}[Proof of Theorem \ref{thm:main}]
Fix $H$ and $\rbjt{H}$ as in Section~\ref{sec:distance}, and write $r=\lfloor d/2\rfloor$. We first contract $H$ to one vertex $z$: identify all vertices of $H$, delete every resulting loop, and retain all parallel edges as distinct edges. Denote the resulting graph by $J$. Obviously, the contraction operation keep $J$ to be connected and bridgeless.
Moreover, Lemma \ref{lem:distance-to-core} gives $\operatorname{dist}_J(z,x)\le\operatorname{dist}_G(x,V(H))\le r$ for every $x\in V(J)\setminus\{z\}$.

 Apply Theorem~\ref{thm:radius} and fix the resulting strong orientation $\rbjt{J}$. When recover $z$ from $J$ to obtain $G$, keep the orientations of all edges in $\rbjt{J}$.
 Keep the prescribed orientation $\rbjt{H}$ of $H$. Orient all edges which have both ends in $V(H)$ and do not belong to $E(H)$  arbitrarily. These  define an orientation $\rbjt{G}$ of $G$.

By  Theorem~\ref{thm:radius},  for any $x\in V(G)\backslash V(H)$, $\dist_{\rbjt{G}}(x,V(H))\le r(r+1)$ and $\dist_{\rbjt{G}}(V(H),x)\le r(r+1)$. By Lemma \ref{lem:core},
$\dist_{\rbjt{H}}(u,v)\le \varphi(v)-\varphi(u)+3d-\varepsilon$ for any $u,v\in V(H)$. Since $0\le\varphi(v)\le 2d$ for any $v\in V(H)$, we get $diam(\rbjt{H})\le 5d-\varepsilon$.
Then for any ordered pair $x,y\in V(G)$, 

 \[
\begin{aligned}
\dist_{\rbjt{G}}(x,y)&\le  dist_{\rbjt{G}}(x,V(H))+diam(\rbjt{H})+dist_{\rbjt{G}}(V(H),y) \\&\le 2r(r+1)+5d-\varepsilon \\& = 2 \lfloor d/2\rfloor^2+2\lfloor d/2\rfloor+5d-\varepsilon\\& \le \left\lceil\frac{d^2}{2}\right\rceil+6d.
\end{aligned}
\]
\end{proof}

\vskip 5mm
\noindent{\bf\large Acknowledgements}
\vskip 2mm

Chen is supported by National Key R\&D Program of China under grant number 2024YFA1013900 and NSFC under grant number 12471327, and 
Wang is supported by  NSFC under grant number 12401447. 
\vskip 5mm
\noindent{\bf\large AI Declaration}
\vskip 2mm

We used ChatGPT to assist with the exploration of proof strategies and the preparation of this manuscript.  Inspired by the contraction method used  in~\cite{BabuEtAl2021, WangChen2025}, in  September 2026,  we asked ChatGPT (GPT-6) whether the bound could be improved by contracting a suitable subgraph and applying induction on $d$. Its proposed argument did not use induction. Instead, it constructed a subgraph that is within a small distance of every  vertex outside this subgraph, and admits a strong orientation of diameter $O(d)$. And then extended this orientation to the whole graph. This argument gave a proposed upper bound of $\frac{2}{3}d^2+O(d)$.
When we asked whether the quadratic coefficient could be reduced further, ChatGPT did not provide a stronger bound.

To obtain the quadratic coefficient $1/2$, we were led to seek a connected bridgeless subgraph $H$ with the following two properties: every vertex outside $H$ is at distance at most $\lfloor d/2\rfloor$ from $H$, and $H$ admits a strong orientation of diameter $O(d)$. After contracting $H$ to a vertex $z$, Theorem~\ref{thm:radius} can be applied with $r=\lfloor d/2\rfloor$. Restoring the prescribed orientation of $H$ then adds only $O(d)$ to the directed distances, and hence yields an upper bound of the form $d^2/2+O(d)$.

To do this, we need to ensure that $H$ is a central subgraph of $G$.  
Inspired by the proofs for graphs of diameter~$3$ in~\cite{WangChen2022} by the first and third authors, where the vertex set is partitioned according to the distances from the two endpoints of a selected edge, we choose two vertices at distance $d$ and partition $V(G)$ into two parts $X$ and $Y$ according to their relative distances from these two vertices. During the development of the proof, we first considered a finer partition, but only two parts are needed in the final argument. We then construct $H$ so that it contains every edge between $X$ and $Y$. The resulting subgraph has precisely the two properties required above.

 In subsequent discussions about improving the linear term, ChatGPT proposed an argument with a linear coefficient strictly between $2$ and $3$, and we have checked its correctness. Hence the linear coefficient of $f(d)$ is between 1 and 3. This argument involved more complicated calculations but did not change the proof framework and   the subgraph $H$.
We believe that determining the sharp linear term may require either a better choice of the subgraph $H$  or a more general result in place of the radius theorem.

All mathematical arguments and proofs in the final
manuscript were written and verified by the authors.


\begin{thebibliography}{99}
\bibitem{AiLeiRen2026}
J.~Ai, H.~Lei and Z.~Ren,
A quadratic upper bound for the maximum oriented diameter with leading constant~1,
\emph{Journal of Graph Theory} (2026), Early View, 1--13.


\bibitem{BabuEtAl2021}
J.~Babu, D.~Benson, D.~Rajendraprasad and S.~N.~Vaka,
An improvement to Chv\'atal and Thomassen's upper bound for oriented diameter,
\emph{Discrete Applied Mathematics} \textbf{304} (2021), 432--440.


\bibitem{BauDankelmann2015}
S.~Bau and P.~Dankelmann,
Diameter of orientations of graphs with given minimum degree,
\emph{European Journal of Combinatorics} \textbf{49} (2015), 126--133.


\bibitem{ChenChang2023}
B.~Chen and A.~Chang,
Oriented diameter of graphs with given girth and maximum degree,
\emph{Discrete Mathematics} \textbf{346} (2023), no.~4, article~113287.


\bibitem{ChvatalThomassen1978}
V.~Chv\'atal and C.~Thomassen,
Distances in orientations of graphs,
\emph{Journal of Combinatorial Theory, Series B} \textbf{24} (1978), no.~1, 61--75.


\bibitem{CochranWang2026}
G.~Cochran and Z.~Wang,
On the oriented diameter of graphs with given minimum degree,
\emph{Journal of Graph Theory} (2026), Early View, 1--14.


\bibitem{DankelmannGuoSurmacs2018}
P.~Dankelmann, Y.~Guo and M.~Surmacs,
Oriented diameter of graphs with given maximum degree,
\emph{Journal of Graph Theory} \textbf{88} (2018), no.~1, 5--17.


\bibitem{DankelmannEtAl2024}
P.~Dankelmann, J.~Morgan and E.~Rivett-Carnac,
The oriented diameter of graphs with given connected domination number and distance domination number,
\emph{Graphs and Combinatorics} \textbf{40} (2024), article~18.




\bibitem{FengEtAl2026}
Y.~Feng, H.~Lei, X.~Lian and Z.~Ren,
An improved upper bound for oriented diameter of graphs with diameter~$4$,
\emph{arXiv preprint arXiv:2609.29526}, 2026.

\bibitem{FominEtAl2004}
F.~V.~Fomin, M.~Matamala, E.~Prisner and I.~Rapaport,
AT-free graphs: linear bounds for the oriented diameter,
\emph{Discrete Applied Mathematics} \textbf{141} (2004), nos.~1--3, 135--148.


\bibitem{FominMatamalaRapaport2004}
F.~V.~Fomin, M.~Matamala and I.~Rapaport,
Complexity of approximating the oriented diameter of chordal graphs,
\emph{Journal of Graph Theory} \textbf{45} (2004), no.~4, 255--269.


\bibitem{GeLiuWang2025}
Y.~Ge, X.~Liu and Z.~Wang,
On the oriented diameter of near planar triangulations,
\emph{Discrete Mathematics} \textbf{348} (2025), no.~5, article~114406.



\bibitem{G} G. Gutin, Minimizing and maximizing the diameter in orientations of graphs, \emph{Graphs and Combinatorics}, \textbf{10}(1994), 225--230.



\bibitem{KurzLatsch2012}
S.~Kurz and M.~L\"atsch,
Bounds for the minimum oriented diameter,
\emph{Discrete Mathematics \& Theoretical Computer Science} \textbf{14} (2012), no.~1, 109--140.


\bibitem{KwokLiuWest2010}
P.~K.~Kwok, Q.~Liu and D.~B.~West,
Oriented diameter of graphs with diameter~3,
\emph{Journal of Combinatorial Theory, Series B} \textbf{100} (2010), no.~3, 265--274.


\bibitem{LinWangYou2026}
J.~Lin, X.~Wang and L.~You,
An improved upper bound on the oriented diameter of graphs with diameter~$4$,
\emph{arXiv preprint arXiv:2605.11667}, 2026.

\bibitem{LinYou2026}
J.~Lin and L.~You,
Oriented diameter of graphs with diameter~$4$ and given edge girth,
\emph{Discrete Applied Mathematics} \textbf{391} (2026), 413--431.


\bibitem{Liu2026}
X.~Liu,
Improved bounds on the oriented diameter of planar triangulations,
\emph{arXiv preprint arXiv:2608.19089}, 2026.


\bibitem{P} J. Plesn\'ik, Remarks on diameters of orientations of graphs, \emph{Acta Mathematica Universitatis Comenianae}, \textbf{46-47}(1985), 225--236.


\bibitem{Robbins1939}
H.~E.~Robbins,
A theorem on graphs, with an application to a problem of traffic control,
\emph{The American Mathematical Monthly} \textbf{46} (1939), no.~5, 281--283.



\bibitem{LS} L. $\check{\text{S}}$olt\'es, Orientations of graphs minimizing the radius or the  diameter, \emph{Mathematica Slovaca}, \textbf{36}(1986), 289--296.

\bibitem{Surmacs2017}
M.~Surmacs,
Improved bound on the oriented diameter of graphs with given minimum degree,
\emph{European Journal of Combinatorics} \textbf{59} (2017), 187--191.


\bibitem{Wang} X.~Wang, Oriented diameter and radius of triangulations, \emph{Annals of Applied Mathematics}, \textbf{41}(2025), 484--491.



\bibitem{WangChen2022}
X.~Wang and Y.~Chen,
Optimal oriented diameter of graphs with diameter~3,
\emph{Journal of Combinatorial Theory, Series B} \textbf{155} (2022), 374--388.


\bibitem{WangChen2025}
X.~Wang and Y.~Chen,
Oriented diameter of graphs with given domination number,
\emph{arXiv preprint arXiv:2506.15997}, 2025. 

\bibitem{WangEtAl2021}
X.~Wang, Y.~Chen, P.~Dankelmann, Y.~Guo, M.~Surmacs and L.~Volkmann,
Oriented diameter of maximal outerplanar graphs,
\emph{Journal of Graph Theory} \textbf{98} (2021), no.~3, 426--444.

\end{thebibliography}
\end{document}